\numberwithin{equation}{section}
\newtheorem{theo}{Theorem}[section]
\newtheorem{prop}[theo]{Proposition}
\newtheorem{cor}[theo]{Corollary}
\newtheorem{lem}[theo]{Lemma}
\newtheorem{defi}[theo]{Definition}
\newtheorem{ass}[theo]{Assumption}
\newtheorem{rem}[theo]{Remark}
\title{Transversally strictly hyperbolic systems}
\author{Tatsuo Nishitani\footnote{Department of Mathematics,
Graduate School of Science, 
Osaka University, Osaka:  
nishitani@math.sci.osaka-u.ac.jp
}}
\date{}
\begin{document}

\maketitle

\def\RR{{\mathbb R}}
\def\CC{{\mathbb C}}
\def\lr#1{\langle{#1}\rangle}
\def\al{\alpha}
\def\be{\beta}
\def\ga{\gamma}
\def\si{\sigma}
\def\la{\lambda}
\def\bx{{\bar x}}
\def\bxi{{\bar \xi}}
\def\La{\Lambda}
\def\cL{{\mathcal L}}
\def\cE{{\mathcal E}}
\def\D{\partial}
\def\N{{\mathbb N}}
\def\xim{\lr{\xi}_{\mu}}
\newcommand{\mez}{\frac{1}{2}}
\newcommand{\Id}{\mathrm{Id}}
\newcommand{\re}{{\mathsf{Re\,}}}
\newcommand{\im}{{\mathsf{Im\,}}}
\newcommand{\eps}{\varepsilon }

\begin{abstract}
We  consider the Cauchy problem for first-order systems. Assuming that the set $\Sigma$ of singular points of the  characteristic variety is a smooth manifold and the characteristic values are real and semisimple we introduce a new class which is strictly hyperbolic in the directions transverse to $\Sigma$.  If the propagation cone and $\Sigma$ are compatible we prove, under some additional conditions,  that transversally strictly hyperbolic systems are strongly hyperbolic. On the other hand if the propagation cone is incompatible with $\Sigma$ then transversally strictly hyperbolic systems are much more involved which is discussed taking an interesting example.
\end{abstract}

\smallskip
 {\footnotesize Keywords: Transversally strictly hyperbolic, Cauchy problem, strongly hyperbolic, uniformly diagonalizable, propagation cone, compatible.}
 
 \smallskip
 {\footnotesize Mathematics Subject Classification 2010: Primary 35L45; Secondary 35F40}

\section{Introduction}

In this note, we  continue to study a new class of first-order systems  
 \begin{equation}
 \label{eq:siki}
 L (t, x,  D_t, D_x)  = D_{t } - \sum_{j=1}^d A_j(t, x) D_{x_j} = D_t - A(t, x, D_x) 
 \end{equation}
 which we call \emph{transversally strictly hyperbolic systems} in \cite{MeNi:KJM} and we 
 discuss whether transversally strictly hyperbolic systems
 are \emph{strongly hyperbolic}, which means by definition that  for all lower-order terms $B$ 
the Cauchy problem for $L + B$   is  well-posed in $C^{\infty}$. Here we use the notation  $D =-i\,\partial $ for partial derivatives. We make assumptions precise under which we work.
 \begin{ass}
\label{asshyp}\rm 
The coefficient matrices   $A_j(t, x)$ are $C^\infty$ in a neighborhood  $\Omega$ of the origin $(0,0)\in \RR^{1+d}$ and they act in $\CC^m$. 
Moreover, for all $(t, x, \xi)$ the eigenvalues of $A(t, x, \xi) = \sum_{j=1}^d \xi_j A_j(t, x)$ are real and semisimple. 
\end{ass}
We denote by $\Sigma$ the set of singular points of the characteristic variety 
of $L$, that is the set of  $(t, x, \tau, \xi) \in  T^*\Omega\backslash\{0\}\simeq \Omega\times ( \RR^{1+d} \backslash\{0\})$  such that 
$\det L(t, x, \tau, \xi) $ vanishes together with the first-order derivatives. 
\begin{ass}
\label{asshyp:2}\rm
We assume that $\Sigma$ is a smooth $C^\infty$-manifold in $T^*\Omega\backslash\{0\}$ and that, on 
each component of $\Sigma$ the dimension of 
${\rm Ker}\, L(t,x,\tau,\xi)$ is constant. 
\end{ass}
Note  that Assumption~\ref{asshyp} implies that at characteristic points, 
\begin{equation}
\label{eq:hogo}
{\rm dim}\,{\rm Ker}\,L(t, x, \tau, \xi)=\mbox{multiplicity of the eigenvalue $\tau $}. 
\end{equation}
\begin{defi}
\label{de:localization}\rm
Recall first the invariant definition of the 
\emph{localized system} (or \emph{localization}) $L_\rho$ at $\rho \in \Sigma$: 
\begin{equation}
L_\rho (\dot \rho)  = \varpi_\rho  ( L'(\rho) \cdot \dot \rho) \, \imath_\rho 
\end{equation} 
where  $ \imath_\rho  $ is the injection of ${\rm Ker}\, L(\rho)$ into $\CC^m$, $\varpi_\rho $ is the projection from
$\CC^m$ onto $\CC^m/{ \rm Range}\, L(\rho)$ and $L' (\rho):T_{\rho}(T^*\Omega)\to {\rm Hom}(\CC^m,\CC^m) $ is the derivative of $L$ at $\rho$. 
Since ${\rm Ker}\, L(\rho) \cap {\rm Range}\, L(\rho )= \{ 0 \}$  by Assumption~\ref{asshyp}, 
$L_\rho$ is a well-defined linear map on  ${\rm Ker}\,L(\rho)$ (see for example \cite[Chapter 4]{Ni:book}). Moreover $  \varpi_\rho  ( L'(\rho) \cdot T_{\rho}\Sigma) \, \imath_\rho=0$ by Assumptions  \ref{asshyp} and \ref{asshyp:2}.
\end{defi}
Note that $L_\rho$ is hyperbolic in the time direction, that is ${\rm det}\,L_{\rho}$ is a hyperbolic polynomial in the time direction (for more about the localization, see \cite[Section 4.1]{Ni:book}). We now introduce transversally strictly hyperbolic systems:
\begin{defi}\rm
If for all $\rho \in \Sigma$, $L_\rho (\dot \rho) $ is strictly hyperbolic in the time direction, 
 on   $~T^*\Omega/T_{\rho}\Sigma$,  then we call $L$ a transversally strictly hyperbolic system.
\end{defi}
Strictly hyperbolic system is smoothly symmetrizable (see \cite{Mi:book}) and repeating the construction of symmetrizer (\cite[Chapter  6]{Mi:book}) for strictly hyperbolic systems it is easily seen that transversally strictly hyperbolic systems are also symmetrizable in the following sense (\cite[Proposition 2.2]{MeNi:KJM}).
\begin{lem}
\label{lem:iosen}
 If $L$ is transversally strictly hyperbolic then $L$ is uniformly symmetrizable  (equivalently uniformly diagonalizable), that is there is a family of  symmetric positive definite matrices $S(t, x,  \xi)$, such that $S$ and $S^{-1}$ are uniformly bounded and $ S  A  $ is  symmetric. 
\end{lem}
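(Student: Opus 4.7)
The plan is to construct the symmetrizer locally in $(t, x, \xi)$ and then glue by a smooth partition of unity $\{\chi_i\}$: if each $S_i$ is a local symmetrizer on an open set $U_i$ (hermitian, uniformly bounded above and below by positive constants, and $S_i A$ hermitian), then $S := \sum_i \chi_i S_i$ is again hermitian with $S, S^{-1}$ uniformly bounded and $SA = \sum_i \chi_i(S_i A)$ hermitian. Fix a base point $(t_0, x_0, \xi_0)$ and let $\tau_1 < \cdots < \tau_r$ be the distinct eigenvalues of $A(t_0, x_0, \xi_0)$ with multiplicities $m_k$. Enclose each $\tau_k$ in a small pairwise disjoint contour $\gamma_k \subset \CC$; in a neighborhood of the base point the spectrum of $A$ stays inside $\bigcup \gamma_k$, and the holomorphic-functional-calculus projectors
\[
\Pi_k(t, x, \xi) \;=\; \frac{1}{2\pi i}\oint_{\gamma_k}\bigl(z\,\Id - A(t, x, \xi)\bigr)^{-1} dz
\]
are $C^\infty$, mutually orthogonal, sum to $\Id$, and commute with $A$. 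It therefore suffices to produce a symmetrizer $S_k$ for each restriction $A_k := A|_{\mathrm{Im}\,\Pi_k}$, since then $S := \sum_k \Pi_k^* S_k \Pi_k$ works on $\CC^N$.

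Inside a single spectral group, $A_k$ has real semi-simple spectrum clustered near $\tau_k$. Because the component $\Sigma_k$ of $\Sigma$ through $\rho_k := (\tau_k, t_0, x_0, \xi_0)$ is smooth with $\dim \ker L = m_k$ constant, $\tau_k$ extends smoothly to a function $\tau_k(t, x, \xi)$ near the base point; after subtracting $\tau_k(t, x, \xi)\,\Id_{m_k}$ we may assume $A_k$ vanishes at $(t_0, x_0, \xi_0)$ and $\tau_k \equiv 0$. Transverse strict hyperbolicity now supplies the decisive input: on the normal space $T^*\RR^{d+1}/T_{\rho_k}\Sigma$ the localized matrix $L_{\rho_k}(\dot\rho)$, acting on $\ker L(\rho_k) \simeq \mathrm{Im}\,\Pi_k$, has $m_k$ distinct real eigenvalues for every $\dot\rho$ in a nonempty open set. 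Choosing a smooth field $\dot\rho(t, x, \xi)$ of such transverse directions and applying standard perturbation theory to the one-parameter family $s \mapsto A_k\bigl((t, x, \xi) + s\,\dot\rho\bigr)$ near $s = 0$, we obtain smooth branches of the eigenvalues of $A_k$ together with a smooth invertible diagonalizer $P_k(t, x, \xi)$ so that $P_k^{-1} A_k P_k$ is real diagonal. Then $S_k := (P_k^{-1})^* P_k^{-1}$ is hermitian, positive-definite, and $S_k A_k$ is hermitian.

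The main obstacle is this last step: ensuring that $P_k$ and $P_k^{-1}$ stay uniformly bounded as $(t, x, \xi)$ crosses $\Sigma_k$, where individual eigenvalues of $A_k$ coalesce. This is precisely what transverse strict hyperbolicity provides — the separation of the eigenvalues of $L_{\rho_k}(\dot\rho)$ in transverse directions prevents the limiting eigenvectors of $A_k$ from becoming parallel, so the diagonalizer cannot degenerate; a compactness argument on the sphere $|\xi| = 1$ then promotes local boundedness to the uniform boundedness required by the statement. Without this transverse spectral gap (as in the trivial example $A = \xi_1 \Id$) no bounded basis of eigenvectors need exist and uniform symmetrization fails.
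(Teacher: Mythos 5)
The paper does not actually prove this lemma here --- it cites \cite[Proposition 2.2]{MeNi:KJM} --- so I can only judge your argument on its own terms. Your reductions are fine: the partition of unity, the separation of the spectrum into clusters by contour integrals, and the assembly $S=\sum_k\Pi_k^*S_k\Pi_k$ are all standard and correct. The gap is in the one step that carries all the content, namely producing a \emph{smooth} invertible diagonalizer $P_k$ with $P_k$, $P_k^{-1}$ uniformly bounded across $\Sigma_k$. First, the smoothness (even continuity) claim is false, and this very paper contains the counterexample: $L_a$ is transversally strictly hyperbolic for every $a$, yet Lemma~\ref{lem:noLip} asserts that for constant $a\neq 0$ there is \emph{no} symmetrizer continuous at $(x,\xi)=(0,0)$, $\eta=1$; a continuous bounded $P_k$ with bounded inverse would yield the continuous symmetrizer $(P_k^{-1})^*P_k^{-1}$. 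Second, the mechanism you invoke cannot deliver even boundedness: Rellich-type perturbation theory along a single smooth field of lines $s\mapsto(t,x,\xi)+s\,\dot\rho(t,x,\xi)$ controls $A_k$ only on those lines, whereas the eigenstructure of $A_k$ at a point near $\Sigma_k$ is governed by $L_{\rho_k}(\dot\rho)$ with $\dot\rho$ the \emph{actual} transverse displacement of that point, a direction that sweeps out the whole transverse sphere. In the example $L_a$ the limiting eigenvectors as $(x,\xi)\to(0,0)$ genuinely depend on $(x,\xi)/|(x,\xi)|$, which is exactly why no continuous choice exists. Your closing sentence (``the separation of the eigenvalues of $L_{\rho_k}(\dot\rho)$ \dots\ prevents the diagonalizer from degenerating'') is the statement to be proved, not an argument for it.

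The repairable version of your idea runs as follows. Within one cluster, $A_k-\lambda_k\Id$ (with $\lambda_k=\mathrm{tr}(A\Pi_k)/m_k$) vanishes on the smooth submanifold $\{\phi_1=\cdots=\phi_\nu=0\}$ over which $\Sigma_k$ is the graph $\tau=\lambda_k$, so Hadamard's lemma gives $A_k-\lambda_k\Id=\sum_j B_j(t,x,\xi)\,\phi_j(t,x,\xi)$ with smooth $B_j$. Transversal strict hyperbolicity says precisely that $\eta\mapsto\sum_j B_j\eta_j$ is a strictly hyperbolic family, hence admits a symmetrizer $R(t,x,\xi,\eta)$ (sum of squared spectral projections) that is smooth and homogeneous of degree $0$ in $\eta\neq0$. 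Setting $S_k(t,x,\xi)=R\bigl(t,x,\xi,\phi(t,x,\xi)\bigr)$ gives a symmetrizer for $A_k$ that is uniformly bounded with uniformly bounded inverse, by compactness in $(t,x,\xi)$ and in $\eta/|\eta|\in S^{\nu-1}$, but is in general discontinuous on $\Sigma_k$ --- consistent with Lemma~\ref{lem:noLip}. With that replacement your outer structure (clusters, projectors, partition of unity) closes the proof.
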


The Cauchy problem for smoothly symmetrizable hyperbolic systems is $L^2$ well-posed (see \cite{FL}).  Smooth symmetrizability can be weakened as follows.
\begin{theo}[\cite{Me:X}]If there is a symmetrizer which is Lipschitz continuous in $(t,x,\xi)\in \RR^{1+d}\times S^{d-1}$ then the Cauchy problem for $L$ is $L^2$ well-posed and hence $L$ is a strongly hyperbolic system.
\end{theo}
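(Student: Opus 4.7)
The plan is to prove an $L^2$ energy estimate for $L$ by using the Lipschitz symmetrizer $S(t,x,\xi)$ as a multiplier, and then to deduce well-posedness by duality and Hahn--Banach in the standard way. Because $S$ is merely Lipschitz, one cannot quantize it as a classical pseudodifferential operator; instead I would use Bony's paradifferential calculus (or Weyl paradifferential quantization, as in M\'etivier's book), letting $T_S$ be the paradifferential operator of symbol $S$ and similarly $T_A$ for $A$. The point of using paradifferential operators is that symbols of limited (e.g. Lipschitz) regularity in $(x,\xi)$ still generate $L^2$-bounded operators, and that the symbolic calculus produces remainders controlled by $\|S\|_{\mathrm{Lip}}$ and $\|A\|_{\mathrm{Lip}}$.

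Concretely, I would consider $E(t)=\mathrm{Re}\,\langle T_{S(t)} u(t),u(t)\rangle_{L^2}$. Since $S$ is uniformly positive and bounded, G{\aa}rding's inequality (paradifferential version) gives $c\|u\|^2 \le E(t)\le C\|u\|^2$ after a standard truncation at low frequencies. Differentiating along solutions of $L u = f$ and using $Lu=D_t u - A(t,x,D_x) u$, one obtains
\begin{equation*}
\frac{d}{dt} E(t) = \langle T_{\partial_t S} u,u\rangle + i\,\langle (T_S T_A - T_{A}^{*} T_S) u, u\rangle + 2\,\mathrm{Re}\,\langle T_S u, if\rangle,
\end{equation*}
up to $L^2$-bounded remainders. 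The identity $SA=A^*S$ at the symbol level, together with the paradifferential symbolic calculus, means that $T_S T_A - T_A^* T_S$ has symbol of order $0$ whose principal part vanishes: what survives is a Poisson bracket term $\tfrac{1}{2i}\{S,A\}-\tfrac{1}{2i}\{A^*,S\}$, which is merely $L^\infty$ because $S,A$ are Lipschitz, hence bounded on $L^2$. The usual Gronwall argument then yields $\|u(t)\|_{L^2}\lesssim \|u(0)\|_{L^2}+\int_0^t \|Lu(s)\|_{L^2}\,ds$.

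The main obstacle, as indicated, is the loss of regularity coming from the Lipschitz-only dependence of $S$: the commutator $[T_S,T_A]$ would not be $L^2$-bounded under the classical $S^0_{1,0}$ calculus, and ordinary mollification of $S$ would destroy the symmetrization identity. The paradifferential framework is precisely what restores an $L^2$ calculus at this regularity, because the Lipschitz norm of $S$ (and of $A$) controls the operator norm of all first-order commutators appearing in the expansion. A secondary point is to handle low frequencies and the fact that $S$ is defined on $S^{d-1}$ and not globally on $\xi$; this is done by a homogeneous extension and a smooth cut-off near $\xi=0$, producing only $L^2$-bounded perturbations of the estimate.

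Having the a priori estimate for $L$ and, by the same argument applied to the adjoint $L^*$, an estimate for $L^*$, one obtains existence and uniqueness of $L^2$ solutions in the standard way. For strong hyperbolicity, any lower-order term $B\in L^\infty$ is immediately absorbed in Gronwall: the computation above becomes
\begin{equation*}
\frac{d}{dt} E(t) \le C\,E(t) + 2\,|\langle T_S u, i(Lu+Bu)\rangle|,
\end{equation*}
so the estimate persists for $L+B$ with the same constants (up to an extra $\|B\|_{L^\infty}$). By the definition of strong hyperbolicity recalled in the introduction, this proves that $L$ is strongly hyperbolic.
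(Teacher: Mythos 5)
Your overall strategy --- quantize the Lipschitz symmetrizer paradifferentially, run the energy identity $\frac{d}{dt}\mathrm{Re}\,\langle T_S u,u\rangle$ using the exact symbol identity $SA=A^*S$, close by G\aa rding, Gronwall and duality --- is the right one, and it is the method of the cited source \cite{Me:X}; the paper under review states this theorem as a quotation and gives no proof of its own, so there is nothing internal to compare against.

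There is, however, one genuine gap in your sketch: the regularity in $\xi$. The boundedness and composition theorems of paradifferential calculus that you invoke ($T_\sigma:L^2\to L^2$ for $\sigma$ of order $0$, and $T_aT_b-T_{ab}$ of order $m_a+m_b-1$ with norm controlled by Lipschitz norms) require the symbols to be merely $L^\infty$ or Lipschitz in $x$ but \emph{smooth} in $\xi$, with roughly $\lceil d/2\rceil+1$ symbol-class derivatives in $\xi$; the hypothesis here only gives $S$ Lipschitz on $S^{d-1}$, i.e.\ one derivative, which is not enough in $d\ge 2$. For the same reason the step ``the surviving Poisson-bracket term is $L^\infty$, hence bounded on $L^2$'' is not a valid inference: an operator whose symbol is merely bounded in $(x,\xi)$ need not be $L^2$-bounded. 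The obvious repair --- mollify $S$ in $\xi$ at angular scale $\delta$ --- fails, because it destroys the exact identity $SA=A^*S$ and leaves a first-order error $(S_\delta-S)A-A^*(S_\delta-S)=O(\delta|\xi|)$ whose contribution $\delta\,\|u\|_{H^{1/2}}^2$ cannot be absorbed by Gronwall for any fixed $\delta>0$. Handling this loss of $\xi$-regularity without breaking the symmetrization identity is precisely the technical content of the proof in \cite{Me:X}, and your proposal does not address it; the rest of the argument (G\aa rding, absorption of zero-order terms $B$, duality for existence, commuting derivatives to pass from $L^2$ to $C^\infty$ well-posedness) is standard and correct as outlined.
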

However the symmetrizer for transversally strictly hyperbolic systems can not be chosen to be  Lipschitz continuous, not even continuous in general (see \cite[Lemma 3.3]{Me:X}).

Recall that even  at singular points of the characteristic vareity one can define a cone in the phase space, called the propagation cone,  which contains all possible directions along which the singularities could propagate.  In this note, in Section \ref{sec:compatible}, we study the Cauchy problem for transversally strictly hyperbolic systems with the propagation cone which is  compatible $\Sigma$ (the definition will be given below) and we prove, under some additional conditions, that the Cauchy problem is $C^{\infty}$ well-posed for any lower order term (Theorems \ref{thm:involutive_bis} and \ref{thm:symplectic}). If $\Sigma$ is an involutive or a symplectic manifold this is indeed the case. In Section \ref{sec:noncompatible} we study the case when the propagation cone is incompatible with  $\Sigma$. Taking an example containing a parameter proposed in \cite{Me:X} we show that the situation will be very complicated exhibiting well/ill posed  results (depending on the parameter) of the Cauchy problem (Theorems \ref{thm:illposed}, \ref{thm:lessone} and \ref{thm:eqone}).

Before closing this section we make some comments on the uniform symmetrizability. If the Cauchy problem is $L^2$ well-posed then the system is uniformly symmetrizable (see \cite[Theorem 1.2]{IvPe}) while the existence of a bounded symmetrizer does not imply in general the well-posedness of the Cauchy problem, even in $C^{\infty}$ (see  \cite{St}). On the other hand 
\begin{theo}[\cite{Ka}]
\label{thm:CaGe} Assume that $L(t,x,\tau,\xi)$ is uniformly symmetrizable then the Cauchy problem for $L+B$ is well-posed in the Gevrey class of order $1<s< 2$ for any $B$.
\end{theo}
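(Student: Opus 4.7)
The plan is to prove, for every initial datum in $\gamma_{0}^{(s)}(\RR^{d})$, existence and uniqueness of a Gevrey-$s$ solution to $(L+B)u=f$ by establishing an exponentially weighted $L^{2}$ energy estimate of the form
\[
\| e^{\rho(t)\lr{D}^{1/s}} u(t)\|_{L^{2}}^{2} \le C\| e^{\rho(0)\lr{D}^{1/s}} u(0)\|_{L^{2}}^{2} + C\int_{0}^{t} \| e^{\rho(\tau)\lr{D}^{1/s}}(L+B)u(\tau)\|_{L^{2}}^{2}\,d\tau
\]
on a time interval $[0,T]$ on which the decreasing weight $\rho(t)>0$ remains positive, and then concluding by a standard density and duality argument applied backward in time to the adjoint $(L+B)^{*}$, which is again uniformly symmetrizable since $S^{-1}$ symmetrizes $A^{*}$.

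The input is the uniformly bounded symmetrizer $S(t,x,\xi)$ provided by the preceding lemma. Since $S$ need not even be continuous, I would first regularize it by mollification at the frequency-adapted scale $\varepsilon(\xi)=\lr{\xi}^{-1/s}$, producing a Gevrey symbol $S_{\varepsilon}$ of order zero satisfying estimates of the type $|\D_{t,x,\xi}^{\al} S_{\varepsilon}|\le C^{|\al|+1}|\al|!^{s}\lr{\xi}^{|\al|/s}$, still uniformly positive and bounded, and with an approximate symmetrizer identity $S_{\varepsilon}A - A^{*}S_{\varepsilon}=R_{\varepsilon}$ where $R_{\varepsilon}$ is an operator of order $1/s$ (uniformly in $\varepsilon$). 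These estimates place $S_{\varepsilon}$ in a Gevrey class well adapted to conjugation by $e^{\rho\lr{\xi}^{1/s}}$.

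Set $v=e^{\rho(t)\lr{D}^{1/s}} u$ and form the energy $E(t)=(\mathrm{Op}(S_{\varepsilon})v(t),\,v(t))_{L^{2}}$. Differentiating in $t$ and using the equation yields: (i) the good term arising from $\D_{t}$ hitting the exponential weight, which contributes $\rho'(t)\,(\mathrm{Op}(\lr{\xi}^{1/s})v,v)$ and is negative of order $1/s$ when $\rho'<0$; (ii) remainder contributions from $\D_{t}S_{\varepsilon}$, from the symmetrizer defect $R_{\varepsilon}$, from the conjugation commutator $e^{\rho\lr{D}^{1/s}}A(t,x,D)e^{-\rho\lr{D}^{1/s}}-A(t,x,D)$, and from the zeroth order perturbation $B$. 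Each of these is an operator of order at most $1/s$, so choosing $|\rho'(t)|$ sufficiently large (independently of frequency) absorbs all of them into the good term and closes the estimate.

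The hard part will be the Gevrey pseudodifferential and commutator calculus used to control the conjugation by $e^{\rho\lr{\xi}^{1/s}}$, and it is precisely here that the restriction $s\le 2$ enters: each derivative of the weight gains a factor $\lr{\xi}^{1/s-1}$, while iteration of the Moyal-type expansion requires the cumulative loss $1-1/s$ per derivative to be dominated by the gain $1/s$, forcing $1-1/s\le 1/s$, that is $s\le 2$. Once this calculus is set up and the weighted energy inequality is proved on $[0,T]$, existence, uniqueness and finite propagation speed of a solution $u\in C([0,T];\gamma^{(s)}(\RR^{d}))$ for arbitrary Gevrey data and source follow by the usual Hahn--Banach / duality scheme, since the dual estimate for $(L+B)^{*}$ is obtained by exactly the same argument.
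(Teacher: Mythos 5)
This theorem is quoted from Kajitani's paper and is not proved in the present article (the authors only remark that a simple proof can be found in Colombini--Nishitani--Rauch), so there is no in-paper argument to compare against; your sketch is essentially the standard symmetrization proof used in those references: mollify the bounded symmetrizer at the frequency-dependent scale, conjugate by the Gevrey weight $e^{\rho(t)\lr{D}^{1/s}}$ with $\rho$ decreasing, and absorb all order-$\le 1/s$ errors into the good term. One small bookkeeping remark: with mollification scale $\lr{\xi}^{-1/s}$ the symmetrizer defect $R_\eps$ is naturally of order $1-1/s$ (one factor $\eps$ against the first-order symbol $A$), and the whole point is that $1-1/s\le 1/s$ exactly when $s\le 2$ --- which you do state correctly in your final paragraph, so the mechanism forcing the threshold $s\le 2$ is identified correctly.
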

Here we say that $f(x)\in \gamma^{(s)}(\RR^{d})$, the Gevrey class of order $s$, if for any compact set $K\subset \RR^{d}$ there exist $C>0, A>0$ such that we have
\[
|D^{\al}f(x)|\leq CA^{|\al|}|\al|!^s,\;\;x\in K,\;\;\forall \alpha\in\N^{d}.
\]
We denote $\gamma_0^{(s)}(\RR^d)=\gamma^{(s)}(\RR^d)\cap C_0^{\infty}(\RR^d)$.
A simple proof of Theorem \ref{thm:CaGe} by symmetrization is found in \cite{CNR}. Thanks to Lemma \ref{lem:iosen} we have
\begin{prop}
\label{pro:transtrictG}Assume that $L$ is transversally strictly hyperbolic. Then the Cauchy problem for $L+B$ is well-posed in the Gevrey class of order $1<s< 2$ for any $B$.
\end{prop}
We will see in Section \ref{sec:noncompatible} that the Gevrey order $2$ in Proposition \ref{pro:transtrictG} can not be improved in general (Theorem \ref{thm:illposed}).

%%%%%%%%%%%%%%%%%%%%%%
\section{Propagation cone is compatible with $\Sigma$}
\label{sec:compatible}
 
%%%%%%%%%%%%%%%%%%%%
%%%%%%%%%%%%%%%%%%%%

Consider $L_{\rho}$ on $T^*\Omega$ not on $T^*\Omega/T_{\rho}\Sigma$, that is
\[
L_\rho (v)  = \varpi_\rho  ( L'(\rho) \cdot v) \, \imath_\rho,\quad v\in T_{\rho} (T^*\Omega)\simeq \RR^{1+d}\times \RR^{1+d}
\] 
 and hence $L_{\rho}$ is independent of directions parallel to $T_{\rho}\Sigma$.  With $h(t,x,\tau,\xi)={\rm det}\,L(t,x,\tau,\xi)$ in virtue of \eqref{eq:hogo} we have 
\[
{\rm det}\,L_{\rho}(t,x,\tau,\xi)=h_{\rho}(t,x,\tau,\xi)
\]
(see \cite[Lemma 4.2]{Ni:book}) where $h_{\rho}$ is the first non-vanishing term of the Taylor expansion of $h(t,x,\tau,\xi)$ at $\rho$ which is a homogeneous polynomial in $(t,x,\tau,\xi)$. Note that $h_{\rho}$ is a hyperbolic polynomial in the direction $\tau$ (\cite{IvPe}, \cite[Lemma 1.3.1]{Ho}). Recall that the hyperbolicity cone $\Gamma(h_{\rho})$ of $h_{\rho}$ is defined as the connected component of $-H_t$ where $H_t$ is the Hamilton vector field of $t$,  in
\[
\{(t,x,\tau,\xi)\in T_{\rho}(T^*\Omega)\mid h_{\rho}(t,x,\tau,\xi)\neq 0\}.
\]
The propagation cone $C(h_{\rho})$ of $h_{\rho}$ is defined by
\[
C(h_{\rho})=\{X=(t,x,\tau,\xi)\mid \sigma(X,Y)\leq 0, ~\forall Y\in \Gamma(h_{\rho})\}
\]
where $\sigma=d\tau\wedge dt+d\xi\wedge dx=d\tau\wedge dt+\sum_{j=1}^dd\xi_j\wedge dx_j$ is the symplectic two form on $T^*\Omega$. The propagation cone is the {\it minimal} cone including every 
bicharacteristic of $h$ which has $\rho$ as a limit point in the following sense (see \cite[Lemma 1.1.1]{KN}):
\begin{lem}Let $\rho\in T^*\Omega$ be a multiple characteristic of $h$. Assume that there are simple characteristics $\rho_j$ of $h$ and positive numbers $\gamma_j$ such that
\[
\rho_j\to \rho \quad\mbox{and}\quad \gamma_jH_h(\rho_j)\to X(\neq 0)\quad j\to\infty.
\]
Then $X\in C(h_{\rho})$. Here $H_h$ denotes the Hamilton vector field of $h$.
\end{lem}
Since $H_h(\rho)$ is the  direction along which the singularities propagate (if $\rho$ is a simple characteristic) the mutual relative position of $C(h_{\rho})$ and  $T_{\rho}\Sigma$ would be significant for the Cauchy problem and taking this into account  we introduce the following definition.
\begin{defi}
\label{de:seigou}\rm The propagation cone is said to be compatible with $\Sigma$ if, on each component $\Sigma'$ of $\Sigma$, either $C(h_{\rho})\subset T_{\rho}\Sigma$, $\rho\in \Sigma'$ or $C(h_{\rho})\cap T_{\rho}\Sigma=\{0\}$, $\rho\in \Sigma'$ holds, otherwise we say  that the propagation cone is incompatible with $\Sigma$.
\end{defi}
\begin{defi}
\label{def:tekisetu}\rm We say that the Cauchy problem for $L$ is $C^{\infty}$ well-posed near the origin if there exist a positive $\epsilon>0$ and a neighborhood $\omega$ of the origin such that for any $|\bar t|\leq \epsilon$ and $f(x)\in C_0^{\infty}(\omega)$ vanishing in $t<\bar t$ there is a unique $u\in C^{\infty}(\omega)$ vanishing in $t<\bar t$ which satisfies $Lu=f$ in $\omega$.
\end{defi}
 In this section we discuss the case that the propagation cone  is compatible with  $\Sigma$.
\begin{theo}[\cite{MeNi:KJM}]
\label{thm:involutive_bis}Assume that $L$ is transversally strictly hyperbolic and $C(h_{\rho})\subset T_{\rho}\Sigma$ at every $\rho\in \Sigma$. Then the Cauchy problem for $L+B $  is $C^{\infty}$ well-posed for any $B(t,x)$ near the origin.
\end{theo}
This follows from \cite[Theorem 1.6]{ MeNi:KJM} thanks to the following
\begin{lem}[\cite{Ni:JdAM}]
\label{lem:zao}$C(h_{\rho})$ is contained in $T_{\rho}\Sigma$ at every  $\rho\in \Sigma$ if and only if $\Sigma$ is an involutive manifold.
\end{lem}
We turn to study another compatible case, that is  $C(h_{\rho})\cap T_{\rho}\Sigma=\{0\}$. We first recall 
\begin{lem}{\rm(\cite[Lemma 1.1.3]{KN})}
\label{lem:KNsp}
The following conditions are equivalent.
\begin{description}
\item{\rm(i)} $C(h_{\rho})\cap T_{\rho}\Sigma=\{0\}$,
\item{\rm(ii)} there is a hyperplane $H\subset T_{\rho}(T^*\Omega)$ such that 
\[
H\cap C(h_{\rho})=\{0\},\quad H\supset T_{\rho}\Sigma+\lr{H_t},
\]
\item{\rm(iii)} $\Gamma(h_{\rho})\cap (T_{\rho}\Sigma)^{\sigma}\cap \lr{H_t}^{\sigma}\neq\emptyset$,
\item{\rm(iv)} $\Gamma(h_{\rho})\cap (T_{\rho}\Sigma)^{\sigma}\neq\emptyset$
\end{description}
where $\lr{H_t}=\{\lambda H_t\mid \lambda\in\RR\}$ and $S^{\sigma}=\{X\in T_{\rho}(T^*\Omega)\mid \sigma(X,Y)=0, \forall Y\in S\}$.
\end{lem}

Denote by $\pi$ the projection such that $\pi(t,x,\tau,\xi)=(t,x)$.  
\begin{theo}
\label{thm:symplectic}Assume that $L$ is transversally strictly hyperbolic and that at every $\rho\in\Sigma\cap \pi^{-1}(0,0)$ there is a hyperplane $H\subset T_{\rho}(T^*\Omega)$ such that
\begin{equation}
\label{eq:odan}
C(h_{\rho})\cap H=\{0\},\quad H\supset T_{\rho}\Sigma+\lr{H_t}\supset H^{\sigma}.
\end{equation}
 Then the Cauchy problem for $L+B $   is $C^{\infty}$ well-posed  for any $B(t,x)$ near the origin. 
\end{theo}
Here we give a sufficient condition for \eqref{eq:odan}.
\begin{lem}
\label{lem:dtzero}If $dt\big( T_{\rho}\Sigma\cap (T_{\rho}\Sigma)^{\sigma}\big)=0$ then there exists a hyperplane $H\subset T_{\rho}(T^*\Omega)$ such that \eqref{eq:odan} is verified.
\end{lem}
The proof of this lemma will be given later. If $\Sigma$ is a symplectic manifold, that is $T_{\rho}\Sigma\cap (T_{\rho}\Sigma)^{\sigma}=\{0\}$, $\rho\in \Sigma$  then $dt\big( T_{\rho}\Sigma\cap (T_{\rho}\Sigma)^{\sigma}\big)=0$ is obvious and hence  
\begin{cor}
\label{cor:symcase}Assume that $L$ is transversally strictly hyperbolic and $\Sigma$ is either involutive  or symplectic manifold. Then the Cauchy problem for $L+B $  is $C^{\infty}$ well-posed for any $B(t,x)$ near the origin.
\end{cor}
\begin{theo}
\label{thm:compo}Let $L$ be transversally strictly hyperbolic. Assume that at every $\rho\in\Sigma\cap \pi^{-1}(0,0)$ either there is a hyperplane $H\subset T_{\rho}(T^*\Omega)$ satisfying 
\eqref{eq:odan} or there is a neighborhood $U$ of $\rho$ such that $C(h_{\rho})\subset T_{\rho}\Sigma$, $\rho\in U\cap\Sigma$.  Then the Cauchy problem for $L+B $   is $C^{\infty}$ well-posed  for any $B(t,x)$ near the origin. 
\end{theo}
%

 %%%%%%%%%%%%%%%%%%%%%%%%%
 %%%%%%%%%%%%%%%%%%%%%%%%%

\noindent
{ Proof of Theorem \ref{thm:symplectic}: Let $B(t,x)$ be any $m\times m$ matrix valued $C^{\infty}$ function near $(0,0)$. 
In order to prove that the Cauchy problem for $L+B$ is locally solvable near $(0,0)$ in $C^{\infty}$ it is enough to show the existence of a parametrix with finite propagation speed of wave front sets (see \cite[Appendix]{Ni:KJM}, \cite[Appendix]{KN})  at every $(0,0,\xi)$ with $|\xi|=1$ (from now on, we say ''parametrix with fps'' for short).  Let   $|{\bar\xi}|=1$ be arbitrarily fixed 
and denote   by ${\bar \tau}_k$, $k=1,\ldots,l$ the distinct eigenvalues  of $A(0, 0, {\bar \xi})$. Then one can find  $P(t,x,\xi)\in S^0_{phg}$,   $P(t, x, \xi)\sim \sum_{j=0}^{\infty}P_j(t,x,\xi)$ defined for small $|t|$ with ${\rm det}\,P_0(0,0,{\bar\xi})\neq 0$ (for the definition of the symbol class $S_{phg}$, see \cite[Chapter XVIII]{Ho:book:3}) such that  
\[
 (L+B)P\equiv P\,{\rm diag} (D_t-{ A}_1,\ldots, D_t-{ A}_l) \;\quad \text{near}\;\;(0,0,{\bar \xi})
\]
that is,  $C(t,x,\xi)$, the symbol of $(L+B)P-P\,{\rm diag} (D_t-{ A}_1,\ldots, D_t-{ A}_l)$ is in $S^{-\infty}$ in a conic neighborhood of $(0,0,{\bar\xi})$ uniformly in $t$ for small  $|t|$ where ${ A}_k(t,x,\xi)\in S^1_{phg}$ are $m_k\times m_k$ matrix valued symbols, ${ A}_k\sim \sum_{j=0}^{\infty}{ A}_{kj}$ and the eigenvalues of ${ A}_{k0}(t,x,\xi)$ are close to ${\bar \tau}_k$ near $(0,0,{\bar\xi})$, where $m_k$ is the 
multiplicity of ${\bar\tau}_k$ (for a proof, see  \cite{Wa}). From \cite[Proposition A.4]{KN} it follows that $L+B$ has a parametrix with fps at $(0,0,{\bar\xi})$ if ${\rm diag} (D_t-{ A}_1,\ldots, D_t-{ A}_l)$ does. It is clear that if each $D_t-{ A}_j$ has a parametrix with fps $G_j$ at $(0,0,{\bar \xi})$ then $G={\rm diag}(G_1,\ldots,G_l)$ is a parametrix with fps of ${\rm diag} (D_t-{ A}_1,\ldots, D_t-{ A}_l)$ at $(0,0,{\bar \xi})$. Therefore it suffices to show the existence of a parametrix with fps of each $D_t-{ A}_j$ at $(0,0,{\bar\xi})$.  Since 
\begin{align*}
&L(t,x,\tau, \xi)P_0(t,x,\xi)
\\=P_0(t,x,\xi)&\,{\rm diag}(\tau-{ A}_{10}(t,x,\xi),\ldots, \tau-{ A}_{l0}(t,x,\xi))
\end{align*}
 in a conic neighborhood of $(0,0, {\bar\xi})$, denoting by $\Sigma_{(j)}$, the component of $\rho_j=(0,0,{\bar \tau}_j,{\bar\xi})$ in $\Sigma$, the Assumption \ref{asshyp:2} implies that ${\rm dim}\,{\rm Ker}\,(\tau-{ A}_{j0}(t,x,\xi))$ is constant on $\Sigma_{(j)}$ which is $m_j$ by \eqref{eq:hogo}.  Then one has $\tau-{ A}_{j0}(t,x,\xi)=O$ on $\Sigma_{(j)}$  and hence  ${ A}_{j0}(t,x,\xi)-\mu_j(t,x,\xi)I_{m_j}=O$ on $\Sigma'_{(j)}$ which is the projection of $\Sigma_{(j)}$ off $\tau$ coordinate where $\mu_j(t,x,\xi)={\rm tr}A_{j0}(t,x,\xi)/m_j$.  Note that $\Sigma_{(j)}$ is given by $\tau=\mu_j(t,x,\xi)$, $b_{ji}(t,x,\xi)=0$, $i=1,\ldots,k_j$ near $\rho_j$ where $db_{ji}$ are linearly 
independent at $(0,0, {\bar\xi})$. From the assumption there exist hyperplanes $H_j\subset T_{\rho_j}(T^*\Omega)$ such that
\begin{equation}
\label{eq:kawa}
C(h_{\rho_j})\cap H_j=\{0\},\quad H_j\supset T_{\rho_j}\Sigma_{(j)}+\lr{H_t}\supset H_j^{\sigma}.
\end{equation}
Denote $L_j=\tau-{ A}_{j0}$ so that $P_0^{-1}(\tau-A)P_0={\rm diag}(L_1,\ldots,L_l)$. Then from definition  there is a non-singular $m_j\times m_j$ matrix $T$ such that $
T^{-1}L_{\rho_j}T=(L_{j})_{\rho_j}$ from which it follows that
\begin{equation}
\label{eq:yama}
\text{$(L_j)_{\rho_j}$ is strictly hyperbolic  on $T^{*}\Omega/T_{\rho_j}\Sigma_{(j)}$}.
\end{equation}
We show that $D_t-{ A}_j$ has a parametrix with fps at $(0,0,{\bar\xi})$. Denote ${ A}_j$ by ${ A}$, $\Sigma_{(j)}$ by $\Sigma$, $\Sigma_{(j)}'$ by $\Sigma'$, $b_{ji}(t,x,\xi)$ by $b_i(t,x,\xi)$, $m_j$ by $m$ and $\mu_j$ by $\mu$ again,  dropping $j$. Write
\[
D_t-{ A}(t,x,D_x)=\big(D_t-\mu(t,x,D_x)\big)I-{\hat A}(t,x,D_x)
\]
where ${\hat A}(t,x,\xi)\sim {\hat A}_1(t,x,\xi)+{\hat A}_0(t,x,\xi)+\cdots$ and ${\hat A}_1(t,x,\xi)$, homogeneous of degree $1$ in $\xi$, satisfies ${\hat A}_1(t,x,\xi)=O$  on $\Sigma'$ near  $(0,0,{\bar\xi})$. Note that $\mu(t,x,\xi)$ is real valued for the eigenvalues are real. Let $S(t',t)$ be  the solution operator of the Cauchy problem 
\[
D_tu-\mu(t,x,D_x)u=0,\quad u(t', x)=\phi(x)
\]
such that $\phi\mapsto u(t)$. Then it is clear that $S(t,0)(D_t-\mu)S(0,t)=D_t$
and there exists ${\tilde A}\in S^1_{phg}$ such that $
S(t,0){\hat A}S(0,t)={\tilde A}+R$ where $R$ maps $H^{-\infty}=\cup_s H^s(\RR^d)$ into $H^{\infty}=\cap_sH^s(\RR^d)$ (see for example \cite[Chapter VIII]{Tay} ). Therefore one obtains
\begin{equation}
\label{eq:appF}
\big(D_t-\mu(t,x,D)I-{\hat A}\big)S(0,t)=S(0,t)\big(D_t-{\tilde A}\big)+S(0,t)R.
\end{equation}
Note that $S(0,t)$ is a Fourier integral operator associated with the canonical transformation $\chi_t(x,\xi)=\exp{(-tH_{\mu})}(x,\xi)$ and hence ${\tilde A}_1(t,x,\xi)={\hat A}_1(t,\chi_t(x,\xi))$. Therefore , ${\tilde \Sigma}$ the image of $\Sigma$, is given by the equations
\[
\tau=0,\quad {\tilde b}_1(t,x,\xi)=\cdots={\tilde b}_k(t,x,\xi)=0,\;\;{\tilde b}_i(t,x,\xi)=b_i(t,\chi_t(x,\xi))
\]
and ${\tilde A}_1=O$ on ${\tilde \Sigma}'=\{{\tilde b}_1=\cdots={\tilde b}_k=0\}$ where ${\tilde b}_i(t,x,\xi)$ are homogeneous of degree $1$ in $\xi$. From \eqref{eq:appF} and \cite[Proposition A.5]{Ni:book} it follows that $(D_t-\mu) I-{\hat A}$ has a parametrix with fps at $(0,0,{\bar\xi})$ if $D_t-{\tilde A}$ does. We next show that $D_t-{\tilde A}$ has  a parametrix with fps at $(0,0,{\bar\xi})$. To simplify notation write $L=D_t-{\tilde A}_1$ and ${\tilde B}(t,x,\xi)={\tilde A}_1(t,x,\xi)-{\tilde A}(t,x,\xi)$ such that $D_t-{\tilde A}=L+{\tilde B}$ where ${\tilde B}(t,x,\xi)\in S^0_{phg}$ and ${\tilde B}\sim {\tilde B}_0+ {\tilde B}_{-1}+\cdots$.

Denote by $M(t,x,\tau,\xi)$ the cofactor matrix of $L(t,x,\tau,\xi)$. Then by Proposition \cite[Proposition A.2]{Ni:book} we see that $L+{\tilde B}$ has a parametrix with fps at $(0,0,{\bar\xi})$ if  ${\mathcal P}(t,x,D_t,D_x)=\big(L(t,x,D_t,D_x)+{\tilde B}(t,x, D_x)\big)M(t,x,D_t,D_x)$ does. 
\begin{lem}
\label{lem:kijyu:b}Let $\rho=(0,0, 0,{\bar\xi})$. One can write
\begin{equation}
\label{eq:kijyu:b}
{\mathcal P}(t,x,\tau,\xi)=h(t,x,\tau,\xi)I+P_{m-1}+\cdots+P_0
\end{equation}
where $h(t,x,\tau,\xi)={\rm det}\,L(t,x,\tau,\xi)$ and
\begin{itemize}
\item[\rm(i)] there is a hyperplane $H\subset T_{\rho}^*(\Omega)$ verifying  $C_{\rho}(h_{\rho})\cap H=\{0\}$ and $H\supset T_{\rho}{\tilde \Sigma}+\lr{H_t}\supset H^{\sigma}$, 
\item[\rm(ii)] $h(t,x,\tau,\xi)$ is a polynomial in $\tau$ and homogeneous of degree $m$ in $(\tau,\xi)$ which is hyperbolic  in $t$ direction near $(0,0,{\bar\xi})$ and the localization $h_{\rho}(t,x,\tau,\xi)$ is strictly hyperbolic on $T^*_{\rho}\Omega/T_{\rho}{\tilde \Sigma}$, 
\item[\rm(iii)]
$P_j(t,x,\tau,\xi)=\sum_{k=1}^{m}P_{jk}(t,x,\xi)\tau^{m-k}$ where $
P_{jk}(t,x,\xi)$  are positively homogeneous of degree $j-m+k$ in $\xi$ if $j\geq 1$, and 
$P_{0k}(t,x,\xi)\in S^{-m+k}_{phg}$, and $P_j(t,x,\tau,\xi)$ $(j\geq 1)$ vanishes of order $m-2j$ on ${\tilde \Sigma}$ near $\rho$.
\end{itemize}
\end{lem}
\begin{proof} The assertions (i) and (ii) follows from \eqref{eq:kawa} and \eqref{eq:yama} immediately because the canonical transformation $\chi_t(x,\xi)$ leaves the coordinate $t$ invariant. It remains to show (iii). Since ${\tilde A}_1(t,x,\xi)=O$ on ${\tilde \Sigma}$ one can write near $(0,0,{\bar\xi})$
\[
L(t,x,\tau,\xi)=\tau I+\sum_{j=1}^k{\tilde A}_{1j}(t,x,\xi)b_j(t,x,\xi)
\]
and hence $M(t,x,\tau,\xi)$ is a homogeneous polynomial in $(\tau,b_1,\ldots,b_k)$ of degree $m-1$ with coefficients which are homogeneous of degree $0$ in $\xi$. Since 
\begin{align*}
P_{m-j}(t,x,\tau,\xi)=\sum_{l+|\alpha|=j}\frac{(-i)^{j}}{l!\alpha!}\D_{\tau}^l\D_{\xi}^{\alpha}L(t,x,\tau,\xi)\D_{t}^l\D_{x}^{\alpha}M(t,x,\tau,\xi)\\
+\sum_{l+|\alpha|+k+1=j}\frac{(-i)^{j-1}}{l!\alpha!}\D_{\tau}^l\D_{\xi}^{\alpha}{\tilde B}_k(t,x,\xi)\D_{t}^l\D_{x}^{\alpha}M(t,x,\tau,\xi)
\end{align*}
for $1\leq j\leq m-1$ and $\D_{t}^l\D_{x}^{\alpha}M(t,x,\tau,\xi)$ vanishes on ${\tilde \Sigma}$ of order $m-1-l-|\alpha|$ so that $P_{m-j}$ vanishes of order $m-1-j$ there. Then the assertion is clear because $m-2j\leq m-1-j$ for $j\geq 1$.
\end{proof}
We can now prove the existence of a parametrix with fps of ${\mathcal P}$ at $({\bar t},0,{\bar\xi})$ with $|{\bar t}|\leq \epsilon$ for a small $\epsilon>0$ applying \cite[Proposition 6.1, Proposition 6.2]{Ni:KJM} with an obvious modification.   
\begin{lem}
\label{lem:bosai}Assume that {\rm (i)} in Lemma \ref{lem:kijyu:b} holds. Then one can find a neighborhood $U$ of $\rho$ such that ${\tilde \Sigma}\cap U$ is given by $\tau=0$, $b_j(t,x,\xi)=0$, $j=1,\ldots,k$ where $db_j$ are linearly independent at $\rho$ such that
\[
\{\tau,b_1\}\neq 0,\quad \{\tau,b_j\}=\{b_1,b_j\}=0,\;\;j=2,\ldots,k,\;\;\text{at $\rho$}.
\]
\end{lem}
\begin{proof}Let ${\tilde \Sigma}$ be given by $\tau=0$, ${\tilde b}_j=0$ near $\rho$. Note that $\{\tau,{\tilde b}_j\}\neq 0$ at $\rho$ for some $j$. Otherwise one has $H_{\tau}\in T_{\rho}{\tilde \Sigma}\cap C(h_{\rho})$ which is, by Lemma  \ref{lem:KNsp}, a contradiction. Thus we may assume $\{\tau,{\tilde b}_1\}\neq 0$ at $\rho$. Replacing ${\tilde b}_j$ by ${\tilde b}_j-c_jb_1$, $j=2,\ldots,k$ with suitable $c_j\in \RR$ one can assume $\{\tau,{\tilde b}_j\}=0$ at $\rho$ for $j=2,\ldots,k$. Since $H\supset T_{\rho}{\tilde \Sigma}$ hence $H$ is given by $\{db_1(X)=0\}$ where $b_1=\al_0\tau+\sum_{j=1}^k\al_j{\tilde b}_j$  with some $\al_j\in\RR$. Note that $\al_0=0$ because $H\supset \lr{H_t}$. If $\al_1=0$ then $\{\tau, b_1\}=0$ at $\rho$ which implies that $H_{\tau}\in C(h_{\rho})\cap H$ contradicting the assumption.  Therefore ${\tilde \Sigma}$ is given by $\tau=0$, $b_1=0$, ${\tilde b}_j=0$, $j=2,\ldots,k$ near $\rho$. Note that $T_{\rho}{\tilde \Sigma}+\lr{H_t}\supset H^{\sigma}$ implies that $H\supset (T_{\rho}{\tilde \Sigma})^{\sigma}\cap \lr{H_t}^{\sigma}$. Since $H_{{\tilde b}_j}\in (T_{\rho}{\tilde \Sigma})^{\sigma}\cap \lr{H_t}^{\sigma}$, $j=2,\ldots,k$ it follows that $\{b_1,{\tilde b}_j\}=0$ at $\rho$ which proves the assertion putting $b_j={\tilde b}_j$, $j=2,\ldots,k$.
\end{proof}
In \cite{Ni:KJM} for a single operator of the form 
\[
P=h(t,x,D_t,D_x)+p_{m-1}(t,x,D_t,D_x)+p_{m-2}(t,x,D_t,D_x)+\cdots
\]
where $h(t,x,\tau,\xi)$ and $p_{m-j}(t,x,\tau,\xi)$ satisfy (i) (in reality, \cite[Lemma 2.1]{Ni:KJM} resulting from (i) thanks to Lemma \ref{lem:bosai} above), (ii) and (iii) in Lemma \ref{lem:kijyu:b} the existence of a parametrix with fps was proved by reducing the equation $Pu=f$ to a second order system with unknowns $(u^{\sigma}_j)$, $1\leq j\leq [m/2]$, $\sigma$; permutations  on $(1,2,\ldots,m)$ (see \cite[Proposition 7.2]{Ni:KJM}) for which one can apply \cite[ Corollary 6.1]{Ni:KJM}. In the present case, by exactly the same procedure one can reduce ${\mathcal P}U=F$, $U=(u_1,\ldots,u_m)$ to a second order system with unknowns $(u_{ij}^{\sigma})$, $1\leq i\leq m$, $1\leq j\leq [m/2]$, $\sigma$; permutations  on $(1,2,\ldots,m)$, here the size of the reduced system is $m$ times that of $P$,  for which one can apply \cite[Corollary 6.1]{Ni:KJM}. We summarize
\begin{prop}
\label{prop:symplectic}Assume that $L$ is transversally strictly hyperbolic and at every $\rho\in \Sigma\cap \pi^{-1}(0,0)$ there is a hyperplane $H\subset T_{\rho}(T^*\Omega)$ satisfying \eqref{eq:odan}. Then for any $B(t,x)$ there exist $\epsilon_0>0$ and a neighborhood $U$ of $(0,0)$ such that for any $|{\bar t}|\leq \epsilon_0$ and for  any $f\in C_0^{\infty}(U)$ vanishing in $t\leq {\bar t}$ there exists $u\in C^{\infty}(U)$ vanishing in $t\leq {\bar t}$ and satisfying $(L+B)u=f$ in $U$.
\end{prop}
We turn to the uniqueness of solution. Denote $
(L+B)^*={\hat L}(t,x,D_t,D_x)+{\hat B}(t,x)$ where ${\hat L}(t,x,\tau,\xi)=L(t,x,\tau,\xi)^*$ and ${\hat B}=B^*(t,x)-\sum_{j=1}^dD_{x_j}A_j^*(t,x)$.
 Since  ${\rm det}\,{\hat L}(t,x,\tau,\xi)=\overline{h(t,x,\tau,\xi)}=h(t,x,\tau,\xi)$ it is clear that ${\hat L}$ has the same $\Sigma$ and satisfies Assumptions \ref{asshyp} and \ref{asshyp:2} and transversally strictly hyperbolic. Choose a new system of local coordinates $({\tilde t},{\tilde x})$ around $(0,0)$ such that
\[
{\tilde t}=t+\epsilon\sum_{j=1}^dx_j^2,\;\;{\tilde x}_j=x_j,\;\;j=1,2,\ldots,d
\]
which is so called Holmgren transform where $\epsilon>0$ is a small positive constant. In these coordinates the symbol of ${\hat L}$ is given by
\begin{equation}
\label{eq:rekyu}
\begin{split}
{\hat L}({\tilde t}-\epsilon |{\tilde x}|^2,{\tilde x},{\tilde \tau}, {\tilde \xi}+2\epsilon {\tilde\tau}{\tilde x})
=C\big({\tilde \tau}-C^{-1}\sum_{j=1}^dA_j^*(t,x){\tilde\xi}_j\big)\\
=C\big({\tilde \tau}-C^{-1}A^*(t,x,{\tilde \xi})\big)
\end{split}
\end{equation}
where $C=I+2\epsilon\sum A^*_j (t,x){\tilde x}_j$ which is non-singular for small $\epsilon>0$ and small $|x|$. It is known that all eigenvalues of $C^{-1}A^*(t,x,{\tilde\xi})$ are real (see \cite{Wei}, \cite{Lax} also \cite[Lemma 6.5]{Mi:book}). Recall that $\Sigma_{(j)}$, the component of $\rho_j$ in $\Sigma$, is given by $\tau=\mu_j(t,x,\xi)$, $b_{ji}(t,x,\xi)=0$, $i=1,\ldots,k_j$ near $\rho_j$. Consider the equation ${\tilde \tau}=\mu_j({\tilde t}-\epsilon |{\tilde x}|^2,{\tilde x}, {\tilde \xi}+2\epsilon {\tilde\tau}{\tilde x})$ which can be  solved with respect to ${\tilde \tau}$ near $\rho_j$, and  we denote it by ${\tilde \tau}={\tilde \mu}_j({\tilde t},{\tilde x},{\tilde\xi})$. We now denote by ${\tilde \Sigma}_{(j)}$, the manifold given by
\[
{\tilde \tau}={\tilde \mu}_j({\tilde t},{\tilde x},{\tilde \xi}),\quad {\tilde b}_{ji}({\tilde t},{\tilde x},{\tilde \xi})=b_{ji}({\tilde t}-\epsilon |{\tilde x}|^2,{\tilde x}, {\tilde \xi}+2\epsilon {\tilde \mu}_j({\tilde t},{\tilde x},{\tilde \xi}){\tilde x})=0.
\]
Since ${\rm Ker}\,{\hat L}(t,x,\tau,\xi)={\rm Ker}\, \big({\tilde \tau}-C^{-1}\sum_{j=1}^dA^*_j(t,x){\tilde\xi}_j\big)$ by \eqref{eq:rekyu} it follows that 
\[
{\rm dim}\,{\rm Ker}\, \big({\tilde \tau}-C^{-1}\sum_{j=1}^dA^*_j(t,x){\tilde\xi}_j\big)=m_j\quad \mbox{on}\quad {\tilde \Sigma}_{(j)}.
\]
Therefore Assumptions \ref{asshyp} and \ref{asshyp:2} are satisfied for 
\[
{\tilde L}=D_{{\tilde t}}-C^{-1}\sum A^*_j(t,x)D_{{\tilde x}_j}
\]
and  ${\tilde L}$ is transversally strictly hyperbolic since Definition \ref{de:localization} is coordinate free. Moreover   at every $\rho\in{\tilde \Sigma}\cap \pi^{-1}(0,0)$ there is $H\subset T_{\rho}^*(\Omega)$ satisfying \eqref{eq:odan} because $H_{\tilde t}=H_t$ at  $(0,0)$.  
\begin{lem}
\label{lem:itii}Assume that the assumption in Theorem \ref{thm:symplectic} is satisfied. Then for any $B(t,x)$ one can find a neighborhood $U$ of $(0,0)$, positive numbers ${\bar \epsilon}>0, \epsilon>0$ such that for any $f(t,x)\in C_0^{\infty}(U)$ with ${\rm supp}\,f\subset \{(t,x)\mid t\leq {\bar\epsilon}-\epsilon |x|^2\}$ there exists $v\in C^1(U)$ with ${\rm supp}\,v\subset\{(t,x)\mid t\leq {\bar\epsilon}-\epsilon |x|^2\}$ which satisfies $(L+B)^*v=f$.
\end{lem}
\begin{proof}We apply Proposition \ref{prop:symplectic} to ${\tilde L}+C^{-1}{\hat B}$ and $C^{-1}f$ with the reversed time direction and ${\bar t}={\bar \epsilon}$. Turning back to the coordinates $(t,x)$ we conclude the assertion.
\end{proof}
Assume that $C^1$ function $u$ vanishing in $t<0$ satisfies $(L+B)u=0$ in a neighborhood of $(0,0)$. Take $v$ in Lemma \ref{lem:itii} and consider
\begin{align*}
0=\int_0^{\bar\epsilon}\int_{\RR^d}(L+B)u\cdot v dtdx=\int_0^{\bar\epsilon}\int_{\RR^d}u\cdot (L+B)^*v dtdx\\
=\int_0^{\bar\epsilon}\int_{\RR^d}u\cdot fdtdx.
\end{align*}
Since $f\in C_0^{\infty}$ with ${\rm supp}\,\subset\{(t,x)\mid t\leq {\bar\epsilon}-\epsilon|x|^2\}$ is arbitrary we conclude that $u=0$ in the set $\{(t,x)\mid 0\leq t\leq {\bar\epsilon}-\epsilon|x|^2\}$ which proves the uniqueness of solution.

\medskip

\noindent
{ Proof of Theorem \ref{thm:compo}: From \cite[Theorem 1.6]{MeNi:KJM} and Theorem \ref{thm:symplectic} (or rather from the proofs), at every $\rho\in\Sigma\cap \pi^{-1}(0,0)$ there exists a parametrix with fps.

\medskip
\begin{rem}
\label{rem:yoso}\rm It is expected that Theorem \ref{thm:symplectic} holds under the assumption $C(h_{\rho})\cap T_{\rho}\Sigma=\{0\}$, that is under the equivalent assumption of the existence of hyperplane $H\subset T_{\rho}\Sigma$ such that 
\begin{equation}
\label{eq:doticon}
C(h_{\rho})\cap H=\{0\}, \quad H\supset T_{\rho}\Sigma+\lr{H_t}.
\end{equation}
Assuming \eqref{eq:doticon} we can always choose a suitable local canonical coordinates system $(t,x,\tau,\xi)$ in $T^*\Omega$   with which the condition \eqref{eq:odan} holds (see \cite{Ni:OJM}). But in studying the well-posedness of the Cauchy problem, not all canonical transformations are allowed since one can only use the canonical transformation such that the associated Fourier integral operator preserves the causality, that is such $F$ satisfying  $Fu=0$ in $t<\bar t$ if $u=0$ in $t<\bar t$ (this is a special aspect of the Cauchy problem). In general, under the assumption \eqref{eq:doticon} one can not choose such causality preserving  canonical coordinates system with which \eqref{eq:odan} holds. This is the main reason why the study of the Cauchy problem under the condition \eqref{eq:doticon} is not so straightforward.  
\end{rem}
 \begin{rem}
 \label{rem:zohiko}\rm Contrary to the case studied in Theorem \ref{thm:involutive_bis}, in the case treated in Theorem \ref{thm:symplectic}, solutions to the Cauchy problem for $L+B$ lose much regularity depending on $B$.  We give the simplest example (see for example \cite[Section 1.3]{Ni:book})
 \[
 L=\frac{\D}{\D t}+\left(\begin{array}{cc}
0&1\\
t^2&0
\end{array}\right)\frac{\D}{\D x}+B(t,x)
\]
where $h_{\rho}=\tau^2-{\bar\xi}t^2$ and $\Sigma=\{\tau=0,t=0\}$ is a symplectic manifold.
 \end{rem}
\begin{rem}
\rm If  $L$ is   transversally strictly hyperbolic system, by definition $h(t,x,\tau,\xi)={\rm det}\,L(t,x,\tau,\xi)$ is strictly hyperbolic polynomial in $\tau$ on $~T^*\Omega/T_{\rho}\Sigma$, while  $h(t,x,D_t,D_x)$ is not strongly hyperbolic anymore if $m\geq 3$  by \cite[Theorem 1.4 ]{IvPe} whatever the mutual relative position of $C(h_{\rho})$ and  $T_{\rho}\Sigma$ is. 
\end{rem}

\noindent
Proof of Lemma \ref{lem:dtzero}: Without restrictions one can assume that $\Sigma$ is given by $\tau=0$, $b_j(t,x,\xi)=0$, $j=1,\ldots,k$ near $\rho$ where $db_j$ are linearly independent at $\rho$. Note that $\{\tau,b_j\}\neq 0$ at $\rho$ for some $j$. Otherwise one would have  $H_{\tau}\in T_{\rho}\Sigma\cap (T_{\rho}\Sigma)^{\sigma}$ since $(T_{\rho}\Sigma)^{\sigma}$ is spanned by $H_{\tau}$, $H_{b_j}$, $j=1,\ldots,k$ which is a contradiction since $dt(H_{\tau})\neq 0$. As in the proof of Lemma \ref{lem:bosai} we can assume $\{\tau,b_1\}\neq 0$, $\{\tau,b_j\}=0$ for $j=2,\ldots,k$ at $\rho$. If $\{b_1,b_j\}=0$ at $\rho$ for $j=2,\ldots,k$ then $H=\{db_1(X)=0\}$ is a desired hyperplane. Repeating the same argument one can find $\ell ~(\geq 2)$ such that 
\[
\{b_i,b_{i+1}\}\neq 0,\;\;i=0,\ldots,\ell-1,\quad \{b_i,b_j\}=0,\;\;j=i+2,\ldots,k
\]
at $\rho$ where $b_0=\tau$. We show that $\ell$ is odd. In fact if $\ell$ is even then the equations $\sum_{j=0}^{\ell}\{b_i,b_j\}(\rho)c_j=0$, $i=0,1,\ldots,\ell$ has a solution $(c_0,c_1,\ldots,c_{\ell})$ with $c_0\neq 0$. This implies that
\[
\sum_{j=0}^{\ell}c_jH_{b_j}\in T_{\rho}\Sigma\cap (T_{\rho}\Sigma)^{\sigma},\quad dt\big(\sum_{j=0}^{\ell}c_jH_{b_j}\big)\neq 0
\]
which is a contradiction. Since $\ell$ is odd the equations $\sum_{j=1}^{\ell}\{b_i,b_j\}(\rho)c_j=0$, $i=1,\ldots,\ell$ has a non-trivial solution $(c_1,\ldots,c_{\ell})$ where it is easy to see $c_1\neq 0$. Thus $\Sigma$ is given by $\tau=0$, ${\tilde b}_1=\sum_{j=1}^{\ell}c_jb_j=0$, $b_j=0$, $j=2,\ldots,k$. We show that  $H=\{d{\tilde b}_1(X)=0\}$ is a desired plane. Since either $H_{{\tilde b}_1}\in \Gamma(h_{\rho})$ or $-H_{{\tilde b}_1}\in \Gamma(h_{\rho})$ it follows that $C(h_{\rho})\cap H=\{0\}$. It is clear that $H\supset T_{\rho}\Sigma+\lr{H_t}$. On the other hand if $X\in (T_{\rho}\Sigma)^{\sigma}\cap\lr{H_t}^{\sigma}$ then one can write $X=\be_1H_{{\tilde b}_1}+\sum_{j=2}^k\be_jH_{b_j}$ with some $\be_j\in\RR$ hence $d{\tilde b}_1(X)=0$ which shows $H\supset (T_{\rho}\Sigma)^{\sigma}\cap\lr{H_t}^{\sigma}$ and then $T_{\rho}\Sigma+\lr{H_t}\supset H^{\sigma}$. Thus   the proof is complete.
%

%%%%%%%%%%%%
%%%%%%%%%%%%
\section{Propagation cone is incompatible with $\Sigma$}
\label{sec:noncompatible}

We will see in this section that transversally strictly  hyperbolic systems with the propagation cone which is incompatible with $\Sigma$  are more involved. 

\subsection{Example}
We make detailed study for the following $3\times 3$ system proposed by G.M\'etivier \cite{Me:X}
\begin{equation}
\label{eq:Met}
\begin{split}
L_a=\frac{\D}{\D t}+\left(\begin{array}{ccc}
0&1&0\\
1&0&0\\
0&0&0
\end{array}\right)\frac{\D}{\D x}+x\left(\begin{array}{ccc}
0&a&1\\
-a&0&0\\
1+a^2&0&0
\end{array}\right)\frac{\D}{\D y}\\
=\frac{\D}{\D t}+A_1\frac{\D}{\D x}+xA_2(a)\frac{\D}{\D y}=\frac{\D}{\D t}
+G_a
\end{split}
\end{equation}
where $a\in \CC$ and  $L_a$ is symmetric hyperbolic system when $a=0$.
Note that
\[
{\rm det}\,L_a(x,\tau,\xi,\eta)=\tau(\tau^2-\xi^2-x^2\eta^2)=h(x,\tau,\xi,\eta)
\]
and hence $\Sigma=\{\tau=0, \xi=0, x=0, \eta\neq 0\}$ which is  independent of $a$. Let $\rho=(\bar{t},0,\bar{y},0,0,\bar{\eta})\in \Sigma$. Since  
\[
(L_a)_{\rho}(\dot{x},{\dot\tau},{\dot\xi})={\dot\tau} I+A_1{\dot\xi}+{\bar \eta}A_2(a){\dot x}
\]
then  ${\rm det}(L_a)_{\rho}({\dot x},{\dot\tau},{\dot\xi})=h_{\rho}({\dot x},{\dot\tau},{\dot\xi})={\dot\tau}({\dot\tau}^2-{\dot\xi}^2-{\bar\eta}^2{\dot x}^2)$ is a strictly hyperbolic polynomial in $({\dot x},{\dot\tau},{\dot\xi})$. Therefore $L_a$ is transversally strictly hyperbolic system for any $a\in\CC$. It is easy to see that $C(h_{\rho})=\{t\geq \sqrt{x^2+\xi^2/{\bar \eta}^{2}}, \tau=0\}$ hence $
C(h_{\rho})\cap T_{\rho}\Sigma=\,$positive $t$ axis,  and therefore the propagation cone is incompatible with $\Sigma$.

 Replacing $x$ by $y$ or $t$ this example also yields  typical cases that the propagation cone is compatible with $\Sigma$. 
 
 Let $A_1$ and $A_2(a)$ be the same matrices given in \eqref{eq:Met}. Consider
 \[
 L_a^{(0)}=\frac{\D}{\D t}+A_1\frac{\D}{\D x}+yA_2(a)\frac{\D}{\D y}
 \]
then $ {\rm det}\,L^{(0)}_a(x,\tau,\xi,\eta)=\tau(\tau^2-\xi^2-y^2\eta^2)$ and $\Sigma$ is given by $\tau=\xi=y=0$ ($\eta\neq 0$). Therefore $L_a^{(0)}$ is transversally strictly hyperbolic system and $\Sigma$ is involutive. From  \cite[Theorem 1.6]{ MeNi:KJM} it follows that $L_a^{(0)}$ is strongly hyperbolic  for any $a\in\CC$. 

Next we consider
\[
L_a^{(1)}=\frac{\D}{\D t}+A_1\frac{\D}{\D x}+tA_2(a)\frac{\D}{\D y}
 \]
where $ {\rm det}\,L^{(1)}_a(x,\tau,\xi,\eta)=\tau(\tau^2-\xi^2-t^2\eta^2)$ and $\Sigma=\{\tau=\xi=t=0\}$ ($\eta\neq 0$) so that $L^{(1)}_a$ is transversally strictly hyperbolic system.  Since $T_{\rho}\Sigma\cap (T_{\rho}\Sigma)^{\sigma}$ is spanned by $\D/\D x$ then  $dt(T_{\rho}\Sigma\cap (T_{\rho}\Sigma)^{\sigma})=0$ is obvious. Therefore  $L_a^{(1)}$ is strongly hyperbolic near $(0,0)$ for any $a\in\CC$ by Lemma \ref{lem:dtzero} and Theorem \ref{thm:symplectic}.
%
%

%%%%%%%%%%%%%
%%%%%%%%%%%%%
\subsection{Ill-posedness}

Note
\[
L_a^*=-\frac{\D}{\D t}-A_1^*\frac{\D}{\D x}-xA_2^*(a)\frac{\D}{\D y}=-\frac{\D}{\D t}
+G^*_a
\]
and  consider the eigenvalue problem $G^*_aV(x,y)=i\be V(x,y)$. We look for 
$V(x,y)$ in the form  $V=e^{\pm iy}E^{\pm}(x)$ so that the problem is reduced to $(A_1^*\D_x\pm ix A_2^*(a))E^{\pm}(x)=-i\be E^{\pm}(x)$ where
\[
E^{\pm}(x)=\left(\begin{array}{c}
u^{\pm}(x)\\
v^{\pm}( x)\\
w^{\pm}( x)\end{array}\right).
\]
If $u^{\pm}(x)$ satisfies 
\begin{equation}
\label{eq:harmo}
(\D_x^2-x^2+\be^2\pm i {\bar a})u^{\pm}(x)=0
\end{equation}
where ${\bar a}$ stands for the complex conjugate of $a$, then with
\[
\begin{aligned}
&v^{\pm}(x)=\frac{i}{\beta}(\D_x\pm i {\bar a} x)u^{\pm}(x),\\
&w^{\pm}(x)=\mp \frac{x}{\be}u^{\pm}(x)
\end{aligned}
\]
it is easy to examine that $G^*_aV^{\pm}=i\be V^{\pm}$ where $V^{\pm}(x,y)=e^{\pm iy}E^{\pm}(x)$. Since $
G^*_aV^{\pm}(\eta x,\eta^2 y)=i\eta \be V^{\pm}(\eta x,\eta^2 y)$ one has
\[
L^*_a\big (e^{i\be \eta t}V^{\pm}(\eta x,\eta^2 y)\big)=0.
\]
The following lemma is easily checked.
\begin{lem} Assume that $ia \not\in \{z\in\RR; -1\leq z\leq 1\}$. Then either $\be^2+i{\bar a}=1$ or $\be^2-i{\bar a} =1$ has a root $\be\in \CC$ with $\im\be\neq 0$.
\end{lem}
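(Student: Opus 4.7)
The plan is to reduce the claim to an elementary fact about square roots in the complex plane. The key observation I would record first is that for $z \in \CC$, the equation $\beta^2 = z$ has a root with $\im\,\beta > 0$ precisely when $z$ is not a non-negative real number. Indeed, the two square roots of a nonzero $z$ differ by a sign, so exactly one has non-negative imaginary part, and strict positivity fails only when that root is itself real, i.e.\ when $z > 0$; the case $z = 0$ also yields only $\beta = 0$.

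With this in hand, I would write $a = \alpha + i\gamma$ with $\alpha,\gamma \in \RR$, so that
\[
1 - i\bar a = (1 - \gamma) - i\alpha, \qquad 1 + i\bar a = (1 + \gamma) + i\alpha.
\]
Both of these numbers are simultaneously non-negative reals if and only if $\alpha = 0$ and $-1 \leq \gamma \leq 1$, that is, $a = i\gamma$ with $ia = -\gamma \in [-1,1]$. Taking the contrapositive, the hypothesis $ia \notin [-1,1]$ guarantees that at least one of $1 \pm i\bar a$ is either non-real or a strictly negative real, and therefore admits a square root $\beta$ with $\im\,\beta > 0$. This $\beta$ is the one claimed in the lemma.

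I do not expect a substantial obstacle: the argument is essentially algebraic bookkeeping matching the location of $1 \pm i\bar a$ in $\CC$ against the criterion for admitting a square root in the open upper half plane. The only point requiring attention is boundary behavior, namely verifying that the closed interval $[-1,1]$ is the exact set of $ia$-values for which both $1 \pm i\bar a$ lie in $\RR_{\geq 0}$, so that its complement indeed gives the existence of the desired $\beta$.
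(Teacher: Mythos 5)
Your proof is correct: the criterion that $\beta^2=z$ has a root in the open upper half plane exactly when $z\notin\RR_{\geq 0}$, combined with the computation that both $1\pm i\bar a$ lie in $\RR_{\geq 0}$ precisely when $ia\in[-1,1]$, gives the lemma. The paper states the lemma is ``easily checked'' and offers no proof, and your elementary verification is exactly the intended argument.
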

Assume that $ia \not\in \{z\in\RR; -1\leq z\leq 1\}$ and  $\be\in\CC$ with $\im\be\neq 0$ is chosen such that $\be^2\pm i{\bar a} =1$ so that \eqref{eq:harmo} is verified by $u^{\pm}(x)=e^{-x^2/2}$ where we may assume $\im\be> 0$ without restrictions. Therefore 
\[
{\widetilde  W}_{\eta}^{\pm}(t,x,y)=\exp{\big(i\be \eta t\pm iy\eta^2-\frac{1}{2}\eta^2x^2\big)}\big(W_0+\eta xW_1^{\pm}\big)
\]
solves $L^*_a{\widetilde W}_{\eta}^{\pm}=0$ where
\[
W_0=\left(\begin{array}{c}
1\\
0\\
0\end{array}\right),\qquad W_1^{\pm}=\left(\begin{array}{c}
0\\
-i(1\mp i {\bar a})/\be\\
\mp1/\be\end{array}\right).
\]
We now consider the following Cauchy problem
\begin{equation}
\label{eq:modCauchy}
\left\{\begin{array}{lll}
L_aU=0,\\[3pt]
U(0,x,y)={ \phi}(x){ \psi}(y)W_0
\end{array}\right.
\end{equation}
where $\phi, \psi\in C_0^{\infty}(\RR)$  are real valued.  
We remark that we can assume that solutions $U$ to (\ref{eq:modCauchy}) have compact supports with respect to $(x,y)$. To examine this we recall the Holmgren uniqueness theorem (see for example \cite[Theorem 4.2]{Mi:book}). For $\delta>0$ we denote
\[
D_{\delta}=\{(t,x,y)\in \RR^{3}\mid x^2+y^2+|t|<\delta\}
\]
then we have 
\begin{prop}[Holmgren]
\label{pro:Holmgren}
There exists $\delta_0>0$ such that if $U(t,x,y)\in C^1(D_{\delta})$ with $0<\delta\leq \delta_0$ verifies 
\[
\left\{\begin{array}{ll}
L_aU=0\quad \mbox{in}\quad  D_{\delta},\\
U(0,x,y)=0\quad \mbox{on}\quad  (x,y)\in D_{\delta}\cap\{t=0\}
\end{array}\right.
\]
then $U(t,x,y)$ vanishes identically in $D_{\delta}$.
\end{prop}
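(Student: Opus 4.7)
The plan is to invoke Holmgren's classical duality argument, which applies here because the coefficients of $L_a$ are polynomial in $(t,x,y)$, hence real-analytic, and the initial surface $\{t=0\}$ is non-characteristic: the principal symbol $L_a(t,x,y;\tau,\xi,\eta)=\tau I+\xi A+x\eta B$ satisfies $\det L_a(\rho;1,0,0)=1$ at every $\rho$. By continuity, any hypersurface whose normal is sufficiently close to $(1,0,0)$ is non-characteristic for both $L_a$ and its formal adjoint $L_a^*$.

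First I would foliate a neighborhood of the origin by analytic non-characteristic hypersurfaces. A convenient family is
\[
\Sigma_\lambda=\{(t,x,y)\mid t+\epsilon(x^2+y^2)=\lambda\},\qquad \lambda\in[0,\lambda_0],
\]
for some small fixed $\epsilon>0$. For $\delta_0,\lambda_0>0$ chosen small enough, every $\Sigma_\lambda\cap\overline{D_{\delta_0}}$ is non-characteristic, and together with $\{t=0\}$ these surfaces foliate the lens-shaped region $\Omega_{\lambda_0}=\{0<t<\lambda_0-\epsilon(x^2+y^2)\}\cap D_{\delta_0}$. A symmetric family handles the region $t<0$.

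Next, given any real-analytic, compactly supported, matrix-valued Cauchy datum $F$ on $\Sigma_\lambda$, Cauchy--Kovalevskaya produces a real-analytic $V$ satisfying $L_a^*V=0$ with $V|_{\Sigma_\lambda}=F$. Because the coefficients of $L_a^*$ are polynomial of degree at most one, a scale-of-Banach-spaces a priori estimate of Ovsyannikov type guarantees that the domain of existence of $V$ contains all of $\Omega_\lambda$ once $\lambda_0$ is chosen sufficiently small, independently of $F$. Green's identity on $\Omega_\lambda$ then reads
\[
0=\int_{\Omega_\lambda}\big(\langle V,L_aU\rangle-\langle L_a^*V,U\rangle\big)\,dt\,dx\,dy=\int_{\partial\Omega_\lambda}\langle V,L_a(\nu)U\rangle\,dS,
\]
with $\nu$ the outward unit normal. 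The contribution from $\{t=0\}\cap\partial\Omega_\lambda$ vanishes because $U(0,\cdot)=0$, leaving $\int_{\Sigma_\lambda}\langle F,L_a(\nu)U\rangle\,dS=0$. Since compactly supported real-analytic $F$ are dense in $L^2(\Sigma_\lambda)$ and $L_a(\nu)$ is invertible on $\Sigma_\lambda$ by non-characteristicity, we conclude $U|_{\Sigma_\lambda}=0$ for every $\lambda\in(0,\lambda_0]$, i.e.\ $U\equiv 0$ on $\Omega_{\lambda_0}$. The symmetric argument for $t<0$ yields $U\equiv 0$ on $D_{\delta_0}$, hence on every $D_\delta$ with $0<\delta\leq\delta_0$.

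The main obstacle is the uniform domain of existence for $V$: the Cauchy--Kovalevskaya radius must be controlled by purely geometric quantities (the non-characteristicity angle and the polynomial coefficient bounds on $\overline{D_{\delta_0}}$) and not by the analyticity scale of $F$, so that a single $\delta_0$ works for all test data simultaneously. This uniformity is precisely what a scale-of-Banach-spaces argument delivers, and it is what ultimately fixes the value of $\delta_0$ in the statement.
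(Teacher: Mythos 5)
The paper gives no proof of this proposition: it is quoted as an instance of the classical Holmgren uniqueness theorem (citing Mizohata's book), which applies because the coefficients of $L_a$ are polynomial, hence real-analytic, and $\{t=0\}$ is non-characteristic. Your proposal reconstructs exactly that classical duality argument, and its skeleton --- non-characteristic analytic foliation, Cauchy--Kovalevskaya for $L_a^*$ with a domain of existence uniform over the test data, Green's identity, density --- is the right one.

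Two steps, however, do not work as written. First, a real-analytic function with compact support on a connected non-compact hypersurface is identically zero, so ``compactly supported real-analytic $F$ are dense in $L^2(\Sigma_\lambda)$'' is vacuous and the duality step collapses at that point. The standard repair is to take $F$ polynomial: once $\lambda$ is small enough that the lens $\{0<t<\lambda-\epsilon(x^2+y^2)\}$ is entirely contained in $D_{\delta_0}$, the cap $\Sigma_\lambda\cap\{t\ge 0\}$ is compact, $\partial\Omega_\lambda$ has no lateral component, and Weierstrass approximation on the cap supplies the density; the Ovsyannikov-type uniformity you invoke is then legitimate because polynomials are entire, so the existence domain of $V$ depends only on the coefficients. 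Second, the lenses that are contained in $D_{\delta_0}$ (which is exactly what you need to avoid an uncontrolled lateral boundary term in Green's identity) satisfy $\lambda<\delta_0\epsilon/(1+\epsilon)$, so their union is a strictly smaller lens than $D_{\delta_0}\cap\{t>0\}$; the final jump ``hence $U\equiv 0$ on $D_{\delta_0}$'' does not follow from your foliation. To exhaust $D_\delta$ you should instead sweep with surfaces anchored at the rim of the lens, e.g.\ $S_s=\{t=s(\delta-x^2-y^2)\}$ for $s\in[0,1)$: each $S_s\cap\overline{D_\delta}$ is compact with boundary contained in $\{t=0\}$ where $U$ vanishes, and for $\delta_0$ small every $S_s$ is non-characteristic because the roots in $\tau$ of $\tau(\tau^2-\xi^2-x^2\eta^2)$ are bounded by $C|(\xi,\eta)|$ on $\overline{D_{\delta_0}}$. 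With these two repairs your argument does yield the proposition.
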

\begin{theo}
\label{thm:illposed}Assume that $ia\not\in \{z\in\RR; -1\leq z\leq 1\}$. 
Let $\psi\in C_0^{\infty}(\RR)$ be an even function such that  $\psi\not\in\gamma_0^{(2)}(\RR)$ and $\phi\in C_0^{\infty}(\RR)$ with $\phi(0)\neq 0$.  Let $\Omega$ be any neighborhood of the origin of $\RR^{3}$ such that ${\rm supp}\,\phi(x)\psi(y)\subset \Omega\cap\{t=0\}$. Then  the Cauchy problem \eqref{eq:modCauchy} has no $C^1(\Omega)$ solution.
\end{theo}
\begin{proof} Suppose that there were a neighborhood $\Omega$ of the origin such that  ${\rm supp}\,\phi(x)\psi(y) \subset \Omega\cap\{t=0\}$ and the  Cauchy problem \eqref{eq:modCauchy} has a solution 
$U\in C^1(\Omega)$.
Thanks to Proposition \ref{pro:Holmgren} one can choose a small  $T>0$ such that ${\rm supp}\,U\cap  \{0\leq t\leq T\}\subset \Omega$. Denoting 
\begin{align*}
{ W}_{\eta}^{\pm}(t,x,y)=e^{-i\beta\eta T}{\widetilde W}_{\eta}(t,x,y)\\
=e^{\pm i\eta^2 y-i\beta \eta(T-t)}e^{-\eta^2x^2/2}\big(W_0+\eta xW_1^{\pm}\big)
\end{align*}
we have obviously $L^*_a{W}^{\pm}_{\eta}=0$. From
\begin{align*}
0=\int_0^T(L^*_a{W}^{\pm}_{\eta},U)dt=\int_0^T({ W}^{\pm}_{\eta},L_aU)dt\\
+( W^{\pm}_{\eta}(T),U(T))-( W^{\pm}_{\eta}(0),U(0))
\end{align*}
it follows that
\begin{equation}
\label{eq:vincenne}
(W^{\pm}_{\eta}(T),U(T))=( W^{\pm}_{\eta}(0),U(0)).
\end{equation}
Note that the left-hand side of \eqref{eq:vincenne} is $O(1)$ as $\eta\to\infty$ while the right-hand side is
\begin{equation}
\label{eq:vincenne:2}
\eta^{-1}e^{-i\beta \eta T}{\hat \psi}(\eta^2)\int e^{-x^2/2}\phi(\eta^{-1}x)dx
\end{equation}
where ${\hat \psi}$ denotes the Fourier transform of $\psi$. Then from \eqref{eq:vincenne:2} we conclude that there is $C>0$ such that for large positive $\eta$ one has
\[
|{\hat \psi}(\eta^2)|\leq C\eta e^{(-{\mathsf{Im}}\,\beta) \eta T}.
\]
Since $\psi$ is even this shows that $|{\hat \psi}(\eta)|\leq C'e^{-c\,|\eta|^{1/2}}$ with some $c>0$ and hence $\psi\in \gamma_0^{(2)}(\RR)$ which is a contradiction. 
\end{proof}
\begin{cor}
\label{cor:illposed}Assume that $ia\not\in \{z\in\RR; -1\leq z\leq 1\}$. Then the Cauchy problem for $L_a$ is $C^{\infty}$ ill-posed.
\end{cor}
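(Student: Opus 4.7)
The plan is to reduce Corollary \ref{cor:illposed} directly to Proposition \ref{pro:nonsolv}, bypassing the intermediate Gevrey formulation of Theorem \ref{thm:illposed}. The essential observation is that the Gevrey class $\gamma_0^{(2)}(\RR)$ is strictly contained in $C_0^\infty(\RR)$, so one can choose a smooth compactly supported even function $\psi$ which is not in $\gamma_0^{(2)}(\RR)$; the classical way to produce such a $\psi$ is to pick an even $C_0^\infty$ function whose Fourier transform fails to enjoy the Gevrey-$2$ decay $|\hat\psi(\eta)|\leq C e^{-c|\eta|^{1/2}}$.

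With such a $\psi$ in hand, and with any $\phi \in C_0^\infty(\RR)$ satisfying $\phi(0)\neq 0$, the initial datum $U(0,x,y)=\phi(x)\psi(y)W_0$ of the Cauchy problem \eqref{eq:modCauchy} is smooth and compactly supported. Proposition \ref{pro:nonsolv} then guarantees that this problem admits no $C^1$ solution on any neighborhood $\Omega$ of the origin containing the support of the data; a fortiori it has no $C^\infty$ solution. Because $C^\infty$ well-posedness of the Cauchy problem at $\{t=0\}$ requires, for every smooth initial datum, the existence of a local $C^\infty$ solution, this nonexistence contradicts $C^\infty$ well-posedness and the corollary follows. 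Equivalently, one can phrase the argument as an immediate consequence of Theorem \ref{thm:illposed}: since $\gamma_0^{(s)}(\RR)\subset C_0^\infty(\RR)$ for every $s>1$, non-solvability with data in $\gamma^{(s)}$ for some $s>2$ already forbids $C^\infty$ solvability.

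No genuine obstacle arises at this stage: all the analytical content, namely the construction of the asymptotic solutions $W^\pm$ of the adjoint system $L_a^*W=0$ together with the duality identity \eqref{eq:vincenne} producing the forbidden Fourier-transform bound on $\hat\psi$, has already been carried out in the proof of Proposition \ref{pro:nonsolv}. The only remaining point is the soft observation that the hypothesis $\psi\notin\gamma_0^{(2)}(\RR)$ can be met while keeping $\psi$ smooth and compactly supported, so that the counterexample lies within the $C^\infty$ category and the ill-posedness is genuinely $C^\infty$ rather than merely Gevrey.
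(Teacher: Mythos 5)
Your argument is correct and is essentially the route the paper intends: the corollary is an immediate consequence of Proposition \ref{pro:nonsolv} (equivalently of Theorem \ref{thm:illposed}), obtained by choosing an even $\psi\in C_0^{\infty}(\RR)\setminus\gamma_0^{(2)}(\RR)$ and $\phi$ with $\phi(0)\neq 0$, so that the smooth compactly supported datum $\phi(x)\psi(y)W_0$ admits no local $C^1$ (hence no $C^\infty$) solution. Nothing further is needed.
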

%

%%%%%%%%%%%%%
%%%%%%%%%%%%%
\subsection{Well-posedness}

Consider
\[
{\tilde L}_aU=\frac{\D }{\D t}U+A_1\frac{\D}{\D x}U+\phi(x)A_2(a)\frac{\D}{\D y}U=F
\]
where $A_1$ and $A_2(a)$ are the same $3\times 3$ matrices as in \eqref{eq:Met} and $\phi(x)$ is a smooth real valued scalar function with bounded derivatives of all order and $U={^t}(u, v, w)$ and $F={^t}(f, g, h)$. In relation to $L_a$ in \eqref{eq:Met} we are interested in the case $\phi(x)=x$ in a compact neighborhood of the origin but it is not necessarily assumed here. Let $U, F\in C^1(\RR;C_0^{\infty}(\RR^d))$. Assume that  $ia\in  \{z\in\RR; -1< z< 1\}$ so that
\begin{equation}
\label{eq:sika}
a=i\mu,\quad \mu\in \RR,\quad |\mu|<1.
\end{equation}
Denote 
\[
S=\left(\begin{array}{ccc}
1&0&0\\
0&1&0\\
0&0&1/(1-\mu^2)
\end{array}\right)
\]
which is symmetric positive definite. It is easy to check that $SA_1$ and $SA_2(i\mu)$ are both hermitian, that is  $A_1$ and $A_2(i\mu)$ are {\it simultaneously} symmetrizable by $S$.  
It is clear that $(SA_1\D U/\D x,U)+(U,SA_1\D U/\D x)=0$ and
 \begin{align*}
 \big(SA_2(i\mu)\phi(x)\frac{\D U}{\D y},U\big)+\big(U, SA_2(i\mu)\phi(x)\frac{\D U}{\D y}\big)\\
 =\big(\phi(x)(SA_2(i\mu)-A_2^*(i\mu)S)\frac{\D U}{\D y},U)=0.
 \end{align*}
Then one has
\[
\frac{d}{dt}(SU,U)=\frac{d}{dt}\|S^{1/2}U\|^2=2{\mathsf{Re}}\,(SU, F)\leq 2\|S^{1/2}U\|\|S^{1/2}F\|
\]
where $(\cdot,\cdot)$ and $\|\cdot\|$ stands for the $L^2(\RR_{x,y})$ inner product and the norm respectively. 
Hence we have
\begin{equation}
\label{eq:enereq}
\frac{d}{dt}\|S^{1/2}U\|\leq \|S^{1/2}F\|.
\end{equation}
Multiplying \eqref{eq:enereq} by $e^{-\gamma t}$ and integrating such obtained  inequality 
we have
\begin{equation}
\label{eq:makino}
\begin{split}
\|S^{1/2}U(t)\|+\gamma\int_0^te^{\gamma(t-s)}\|S^{1/2}U(s)\|ds\\
\leq e^{\gamma t}\,\|S^{1/2}U(0)\|+\int_0^te^{\gamma(t-s)}\|S^{1/2}{\tilde L}_aU(s)\|ds.
\end{split}
\end{equation}
\begin{lem}
\label{lem:lessone}Assume \eqref{eq:sika}. Then for $U\in C^1(\RR; C_0^{\infty}(\RR^d))$ one has
\begin{equation}
\label{eq:enest:a}
\begin{split}
&\|U(t)\|+\gamma\int_0^te^{\gamma(t-s)}\,\|U(s)\|ds\\
&\leq \frac{1}{\sqrt{1-\mu^2}}\Big(e^{\gamma t}\,\|U(0)\|+\int_0^te^{\gamma(t-s)}\,\|{\tilde L}_aU(s)\|ds\Big).
\end{split}
\end{equation}
The same estimate holds for ${\tilde L}_a^*$.
\end{lem}
\begin{proof} The assertion for ${\tilde L}_a$ is immediate from \eqref{eq:makino} for $\|V\|\leq \|S^{1/2}V\|\leq (1-\mu^2)^{-1/2}\|V\|$. As for ${\tilde L}_a^*$ 
note that  $S^{-1}$ symmetrizes $A_1^*$ and $A_2^*(i\mu)$ simultaneously. Therefore the same arguments proving \eqref{eq:makino} shows  \eqref{eq:enest:a} for ${\tilde L}_a^*$ since $\|S^{-1/2}V\|\leq \|V\|\leq (1-\mu^2)^{-1/2}\|S^{-1/2}V\|$.
\end{proof}
 From Lemma \ref{lem:lessone} one can conclude
  \begin{theo}
  \label{thm:lessone}
 If $ia\in \{z\in\RR;-1<z<1\}$ then the Cauchy problem for ${\tilde L}_a$ is $L^2$ well-posed, and in particular,  ${\tilde L}_a$ is strongly hyperbolic. 
 \end{theo}
 \begin{proof}Denote by $H^s$ and $\|\cdot\|_s$, $s\in \RR$ the usual $L^2$ based Sobolev space of order $s$ and the norm respectively.  Assume that  $B=B(t,x,y)$ is smooth with bounded derivatives of all order and fix $T>0$ arbitrarily. Then from the standard limiting arguments, starting from \eqref{eq:enest:a}   we can conclude that for any $s\in \RR$ there is $C_s>0$ such that we have
 \begin{equation}
 \label{eq:eqL2}
 \|U(t)\|_s\leq C\Big(\|U(0)\|_s+\int_0^t\|({\tilde L}_a+B)^*U(\tau)\|_s d\tau\Big)
 \end{equation}
for any $U\in C^1([0,T];H^s)\cap C^0([0,T];H^{s+1})$.   Therefore the usual duality arguments (see for example \cite[Chapter XXIII]{Ho:book:3}) proves  that for any $U_0 \in H^s$ there exists a solution $U(t)\in C^1([0,T];H^{s-1})\cap C^0([0,T];H^{s})$ to the Cauchy problem
 \[
\left\{\begin{array}{lll}
({\tilde L}_a+B)U=0,\\[3pt]
U(0)=U_0.
\end{array}\right.
\]
The uniqueness of solution follows from \eqref{eq:enest:a}.
 \end{proof}
  \begin{theo}
  \label{thm:eqone}
If $ia\in \{z\in\RR;-1\leq z\leq 1\}$ then the Cauchy problem for ${\tilde L}_a$ is $C^{\infty}$ well-posed. 
 \end{theo}
 \begin{proof}
 Let $a=\pm i$. Then the third equation of ${\tilde L}_aU=F$ is
\[
\frac{\D w}{\D t}=h\quad \text{hence}\quad \frac{\D}{\D t}\Big(\frac{\D w}{\D y}\Big)=\frac{\D h}{\D y}
\]
from which it follows that
\begin{equation}
\label{eq:estw}
\big\|{\D w(t)}/{\D y}\big\|\leq \big\|{\D w(0)}/{\D y}\big\|+\int_0^t\big\|{\D h(s)}/{\D y}\big\|ds.
\end{equation}
Since the first two equations of ${\tilde L}_aU=F$ yield a symmetric system for $(u,v)$ where $w$ is assumed to be known, repeating similar arguments for the case $|\mu|<1$ one has
\begin{equation}
\label{eq:ane}
\frac{d}{dt}\|U\|^2=-2\,{\mathsf{Re}}\,\big(\phi \frac{\D w}{\D y},u\big)+2\,{\mathsf{Re}}\,(F,U)
\end{equation}
where the right-hand side is bounded by $
C\big(\|{\D w(t)}/{\D y}\|+\|F(t)\|\big)\|U(t)\|$. 
Therefore from  \eqref{eq:ane} one has
\begin{equation}
\label{eq:miman}
\|U(t)\|
\leq \|U(0)\|+C\int_0^t\Big(\big\|{\D w(s)}/{\D y}\big\|+\|F(s)\|\Big)ds.
\end{equation}
 Inserting  \eqref{eq:estw} into  \eqref{eq:miman} one obtains
\begin{equation}
\label{eq:imazu}
\begin{split}
\|U(t)\|\leq C'\big(\|U(0)\|+\big\|{\D U(0)}/{\D y}\big\|\big)\\
+C'\int_0^t\big(\|{\tilde L}_aU(s)\|+\big\| \frac{\D}{\D y}{\tilde L}_aU(s)\big\|\big)ds
\end{split}
\end{equation}
for $0\leq t\leq T$. We turn to ${\tilde L}^*_aU=F$. Since the first two equations of ${\tilde L}^*_aU=F$ split from the third equation and yield a symmetric system then we have with $V={^t}(u,v)$ and $G={^t}(f,g)$
\begin{equation}
\label{eq:nakasho}
\begin{split}
\|V(t)\|+\big\|{\D V(t)}/{\D y}\big\|\leq C\big(\|V(0)\|+\big\|{\D V(0)}/{\D y}\big\|\big)\\
+C\int_0^t\big(\|G(s)\|+\big\|{\D G(s)}/{\D y} \big\|\big)ds.
\end{split}
\end{equation}
From the third equation of ${\tilde L}^*_aU=F$ we have
\begin{equation}
\label{eq:asahi}
\|w(t)\|\leq \|w(0)\|+C\int_0^t\big(\|h(s)\|+\big\|{\D u(s)}/{\D y}\big\|\big)ds.
\end{equation}
Inserting \eqref{eq:nakasho} into \eqref{eq:asahi} one has
\[
\|w(t)\|\leq C'\big(\|U(0)\|+\big\|{\D U(0)}/{\D y}\big\|\big)
+C'\int_0^t\big(\|F(s)\|+\big\|{\D F(s)}/{\D y}\big\|\big)ds.
\]
Taking \eqref{eq:nakasho} into account one sees that \eqref{eq:imazu} holds also for ${\tilde L}^*_a$. Then repeating similar arguments as proving Theorem \ref{thm:lessone} one concludes that the Cauchy problem for 
 ${\tilde L}_a$ is well-posed with loss of one derivative, in particular $C^{\infty}$ well-posed.  \end{proof}
%
 %
 %%%%%%%%%%%%%%%%%%
 \begin{center}\scalebox{1}[1]{
\begin{tikzpicture}

\draw[->] (-4,0)--(4,0) node [below] {\Large${\mathsf{Re}}$};
\draw[->] (0,-3)--(0,3.5) node [right] {\Large${\mathsf{Im}}$};

\node at (1.5,2.5) {${\rm Gevrey\,} s$ ill-posed for $s>2$};
\node at (1.5,2) {($C^{\infty}$ ill-posed)};

\draw[ultra thick](0,-1)--(0,1);

\draw[->] (-1.5,-1.5)--(-0.1,0.9);
\draw[->] (-1.5,-1.5)--(-0.1,-1);

\node at (-2,-1.7) {$H^1$ well-posed};
\node at (-2,-2.2) {($C^{\infty}$ well-posed)};

\filldraw [gray] (0,1) circle (2pt);
\filldraw [gray] (0,-1) circle (2pt);

\node at (0.25,1) {\Large $i$};
\node at (0.35,-1) {\Large -$i$};

\node at (-3,3.5) {\Large $a$-plane};

\draw[<-] (0.1,0.5) arc [start angle=135, end angle=90, radius=1]
node [right] {$L^2$ well-posed};

\end{tikzpicture}
}
\end{center}
%

 %%%%%%%%
 %%%%%%%%
 \noindent
Acknowledgements: The author is very grateful to anonymous referee for pointing out some insufficient arguments in the proof of Theorem \ref{thm:symplectic} in the original version.
 
%%%%%%%%%%%%%%%%%%%%
%%%%%%%%%%%%%%%%%%%%

\end{document}